\newtheorem{thm}{Theorem}[section]
\newtheorem{lem}[thm]{Lemma}
\theoremstyle{definition}
\newtheorem{rem}[thm]{Remark}
\numberwithin{equation}{section}
\newcommand{\N}{\mathbb{N}}
\newcommand{\Z}{\mathbb{Z}}
\newcommand{\Q}{\mathbb{Q}}
\newcommand{\C}{\mathbb{C}}
\newcommand{\bk}{\mathbf{k}}
\newcommand{\frH}{\mathfrak{H}}
\newcommand{\frs}{\mathfrak{s}}
\newcommand{\frt}{\mathfrak{t}}
\newcommand{\calZ}{\mathcal{Z}}
\DeclareMathOperator{\Ker}{\mathrm{Ker}}
\author{Shuji Yamamoto}
\address{
JSPS Research Fellow \\ 
Graduate School of Mathematical Sciences\\ 
The University of Tokyo\\ 
3-8-1 Komaba, Meguro, Tokyo, 153-8914 Japan.}
\email{yamashu@ms.u-tokyo.ac.jp}
\thanks{This work was supported by Grant-in-Aid for JSPS Fellows 21$\cdot$5093}
\keywords{multiple zeta values, multiple zeta-star values, 
harmonic algebra, Bowman-Bradley theorem, Kondo-Saito-Tanaka theorem}
\subjclass[2010]{Primary 11M32, Secondary 05A15}
\title[Evaluation of sums of multiple zeta-star values]
{Explicit evaluation of certain sums of multiple zeta-star values}
\begin{document}
\maketitle 

\begin{abstract}
Bowman and Bradley proved an explicit formula 
for the sum of multiple zeta values whose indices are 
the sequence $(3,1,3,1,\ldots,3,1)$ with a number of $2$'s inserted. 
Kondo, Saito and Tanaka considered the similar sum of 
multiple zeta-star values and showed that this value is 
a rational multiple of a power of $\pi$. 
In this paper, we give an explicit formula for the rational part. 
In addition, we interpret the result as an identity in the harmonic algebra. 
\end{abstract}

\section{Introduction}
Let us consider the multiple zeta values (MZV, for short) 
\begin{equation*}
\zeta(k_1,\ldots,k_n)=\sum_{m_1>\cdots>m_n>0}
\frac{1}{m_1^{k_1}\cdots m_n^{k_n}}. 
\end{equation*}
In some cases, explicit evaluations are known 
for these values or sums of them. 
For example, there are the formulas 
\begin{align}
\zeta(\{2\}^q)&=\frac{\pi^{2q}}{(2q+1)!}, \label{eq:s(0,q)}\\
\zeta(\{3,1\}^p)&=\frac{\pi^{4p}}{(2p+1)(4p+1)!} \label{eq:s(p,0)}
\end{align}
(the notation $\{\ \ \}^p$ means that the sequence in the bracket 
is repeated $p$-times). 
In fact, these values are the special cases $s(0,q)$ and $s(p,0)$ of 
the following sums of MZVs 
\begin{equation*}
s(p,q)=\sum_{\substack{j_0,j_1,\ldots,j_{2p}\geq 0\\ j_0+j_1+\cdots+j_{2p}=q}}
\zeta(\{2\}^{j_0},3,\{2\}^{j_1},1,\{2\}^{j_2},3,\ldots,
3,\{2\}^{j_{2p-1}},1,\{2\}^{j_{2p}}), 
\end{equation*}
for which an explicit formula was given by Bowman-Bradley \cite{BB}: 
\begin{equation}\label{eq:s(p,q)}
s(p,q)={2p+q\choose q}\frac{\pi^{4p+2q}}{(2p+1)(4p+2q+1)!}. 
\end{equation}

On the other hand, we may also consider 
the multiple zeta-star values (MZSV for short) 
\begin{equation*}
\zeta^\star(k_1,\ldots,k_n)=\sum_{m_1\geq\cdots\geq m_n\geq 1}
\frac{1}{m_1^{k_1}\cdots m_n^{k_n}}. 
\end{equation*}
As an analogue of $s(p,q)$, we put 
\begin{equation*}
s^\star(p,q)
=\sum_{\substack{j_0,j_1,\ldots,j_{2p}\geq 0\\ j_0+j_1+\cdots+j_{2p}=q}}
\zeta^\star(\{2\}^{j_0},3,\{2\}^{j_1},1,\{2\}^{j_2},3,\ldots,
3,\{2\}^{j_{2p-1}},1,\{2\}^{j_{2p}}). 
\end{equation*}
Then the theorem of Kondo-Saito-Tanaka \cite{KST} states that 
$s^\star(p,q)\in\Q\pi^{4p+2q}$ (see also \cite{Tanaka}). 
The rational part, however, has not been given explicitly 
except for the cases $p=0$ (Zlobin \cite{Zlobin}) 
and $q=0,1$ (Muneta \cite{Muneta}). 
The formula for $p=0$ is 
\begin{equation}\label{eq:s^star(0,q)}
s^\star(0,q)=\zeta^\star(\{2\}^q)=
(2^{2q}-2)\frac{(-1)^{q-1}B_{2q}}{(2q)!}\pi^{2q} 
\end{equation}
($B_{2q}$ is the $2q$-th Bernoulli number). 

In this paper, we prove the following relation between 
$s(p,q)$ and $s^\star(p,q)$: 

\begin{thm}\label{thm:main 3,1,2}
For any $p,q\geq 0$, we have 
\begin{equation}\label{eq:main 3,1,2}
s^\star(p,q)=\sum_{\substack{2i+k+u=2p\\ j+l+v=q}}
(-1)^{j+k}{k+l\choose k}{u+v\choose u} 
s(i,j)\,\zeta^\star(\{2\}^{k+l})\,\zeta^\star(\{2\}^{u+v}). 
\end{equation}
\end{thm}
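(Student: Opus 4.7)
The plan is to lift identity~\eqref{eq:main 3,1,2} to a word-level identity in the harmonic algebra $\frH^1 := \Q\langle e_1, e_2, \ldots\rangle$ and then apply the evaluation $\zeta$. Recall that the star value is given by $\zeta^\star = \zeta \circ S$, where $S\colon \frH^1 \to \frH^1$ is the linear ``star operator'' sending a word $e_{k_1}\cdots e_{k_n}$ to the sum over all compositions, each block being replaced by the single letter whose index is the sum of the block's indices. Setting $a := e_2$ and $A(t) := (1-at)^{-1} \in \frH^1[[t]]$, the generating element
\[
\Sigma_p(t) := \sum_{q \geq 0}\sigma(p,q)\,t^q = A(t)\bigl(e_3\,A(t)\,e_1\,A(t)\bigr)^p
\]
satisfies $\zeta(\sigma(p,q)) = s(p,q)$ and $\zeta(S(\sigma(p,q))) = s^\star(p,q)$, so the problem becomes one of rewriting $S(\Sigma_p(t))$ as a sum of products of a Bowman--Bradley middle and two $\{2\}^*$-type boundaries.

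To carry this out, I would analyze $S$ applied to a typical word $w = a^{j_0}\,e_3\,a^{j_1}\,e_1\,a^{j_2}\cdots a^{j_{2p}}$ by classifying each merging pattern according to a three-fold decomposition: a \emph{left boundary} whose merged image consists only of letters of the form $e_{2r}$, a symmetric \emph{right boundary}, and a \emph{middle} in which no merges are performed and the $(3,1)$-pattern is retained. The key observation enabling the boundary collapse is that merging $e_3\,e_1$ produces $e_4$, the same letter obtained from merging $a\,a$; iteratively, any boundary containing $k$ of the $(3,1)$-letters and $l$ of the $a$'s can absorb into a $\{2\}^*$-word. A careful enumeration should show that the sum of boundary mergings---taken over all interleavings of the $k$ $(3,1)$-letters with the $l$ $a$'s and over all inner merges---equals $\binom{k+l}{k}\,S(a^{k+l})$ in $\frH^1$, evaluating to $\binom{k+l}{k}\,\zeta^\star(\{2\}^{k+l})$; symmetrically the right boundary yields $\binom{u+v}{u}\,\zeta^\star(\{2\}^{u+v})$, and the middle contributes $s(i,j)$ (not its star version, because no merges happen there). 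The signs $(-1)^{j+k}$ arise from a M\"obius inversion needed to make the boundary/middle split unambiguous: requiring boundaries to be \emph{maximal} and subtracting the over-counted smaller configurations inserts alternating signs governed by the interior $2$-count $j$ and the left absorbed $(3,1)$-count $k$.

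The main obstacle is the algebraic boundary identity: showing that the sum of all interleavings and inner merges of $k$ $(3,1)$-letters with $l$ $a$'s equals $\binom{k+l}{k}\,S(a^{k+l})$ in $\frH^1$. While the coincidence of $e_3\,e_1$ and $a\,a$ both merging to $e_4$ makes this plausible, all higher mergings (e.g., $e_2\,e_3\,e_1$ can merge to $e_2\,e_4$, $e_5\,e_1$, or $e_6$) must be accounted for, and verifying that they reorganize into exactly $\binom{k+l}{k}$ copies of $S(a^{k+l})$ requires a careful generating-function or direct combinatorial argument. The second delicate point is checking that the inclusion--exclusion producing the precise signs $(-1)^{j+k}$ actually emerges from the maximal-boundary requirement; I would verify the whole scheme first in the case $p=0$ (where the identity reduces, via \eqref{eq:s(0,q)} and \eqref{eq:s^star(0,q)}, to the generating-function tautology $\frac{\pi y}{\sin\pi y}\cdot\frac{\pi y}{\sin\pi y}\cdot\frac{\sin\pi y}{\pi y} = \frac{\pi y}{\sin\pi y}$) and the case $q=0$ (matching Muneta's formula) before tackling the general argument.
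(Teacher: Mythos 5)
Your plan leaves its two central claims unproved, and as stated they are essentially the whole content of the theorem. The ``boundary identity'' (that the sum over interleavings and inner merges of $k$ pairs $(3,1)$ with $l$ letters $e_2$ collapses to $\binom{k+l}{k}S(e_2^{k+l})$) and the inclusion--exclusion mechanism that is supposed to produce the sign $(-1)^{j+k}$ are exactly the hard parts, and you explicitly defer both. There is also a structural mismatch that the sketch never engages: you decompose $S(w)$ for a single word $w$ into a \emph{concatenation} of a left boundary, a middle and a right boundary, but the right-hand side of \eqref{eq:main 3,1,2} is a \emph{product} of three zeta values, i.e.\ a harmonic ($*$-)product of the three pieces, not a concatenation. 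A prefix/suffix splitting of $S(w)$ evaluates to a single nested sum over $m_1\geq\cdots\geq m_n$, not to $s(i,j)\,\zeta^\star(\{2\}^{k+l})\,\zeta^\star(\{2\}^{u+v})$; turning the one into the other requires precisely the stuffle combinatorics your argument omits. So even granting the boundary identity, the proposal does not yet connect to the stated formula. (Your $p=0$ sanity check is fine in spirit, modulo writing $y$ where $\sqrt{y}$ belongs.)

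For comparison, the paper avoids all of this by proving the truncated identity (Theorem \ref{thm:main s_m}) for every $m$ and letting $m\to\infty$. One introduces the companion sums $t_m(p,q)$ over shuffles of $(b,\{a,b\}^q)$ with $(\{c\}^p)$, packages everything into generating series, and observes that peeling off the terms with $m_1=m$ gives a $2\times 2$ transfer-matrix recursion: the vector $(F_m,G_m)$ is obtained from $(F_{m-1},G_{m-1})$ by the matrix $U_m$, and $(F^\star_m,G^\star_m)$ from $(F^\star_{m-1},G^\star_{m-1})$ by $V_m$, where the hypothesis $a+b=2c$ forces $V_m$ to equal $U_m$ with $y\mapsto -y$ divided by the scalar $\bigl(1-\frac{y-x}{m^c}\bigr)\bigl(1-\frac{y+x}{m^c}\bigr)$. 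Taking the product over $m$ yields $F^\star_m(x,y)=F_m(x,-y)H^\star_m(y-x)H^\star_m(y+x)$, and extracting the coefficient of $x^{2p}y^q$ gives \eqref{eq:main s_m}: the sign $(-1)^j$ comes from the substitution $y\mapsto -y$, and the sign $(-1)^k$ together with the binomial $\binom{k+l}{k}$ from expanding $(y-x)^{k+l}$ --- no M\"obius inversion or maximality argument is needed. If you want to salvage a word-level approach, the paper's closing remark points to the right formalism: carry out the analogous matrix computation in $M_2\bigl(\frH^1_*[[x,y]]\bigr)$ as in \cite{IKOO}, which builds the harmonic products into the bookkeeping from the start.
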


By substituting \eqref{eq:s(p,q)} and \eqref{eq:s^star(0,q)} 
into \eqref{eq:main 3,1,2}, we obtain an explicit formula 
for the value of $s^\star(p,q)$: 
\begin{equation*}
\frac{s^\star(p,q)}{\pi^{4p+2q}}
=\sum_{\substack{2i+k+u=2p\\ j+l+v=q}}
(-1)^{j+k}{k+l\choose k}{u+v\choose u} 
{2i+j\choose j}\frac{\beta_{k+l}\beta_{u+v}}{(2i+1)(4i+2j+1)!}, 
\end{equation*}
where 
\[\beta_r=(2^{2r}-2)\frac{(-1)^{r-1}B_{2r}}{(2r)!}. \]
In particular, when $q=0$, we can reproduce Muneta's expression 
for $s^\star(p,0)$ \cite[Theorem B]{Muneta}. 
When $q=1$, however, our result appears different from his formula 
for $s^\star(p,1)$ \cite[Theorem C]{Muneta}.  

\bigskip

In fact, our result is slightly more general than 
Theorem \ref{thm:main 3,1,2}, namely, 
the numbers $3,1,2$ are replaced by arbitrary positive integers 
$a,b,c$ such that $a+b=2c$ and $a\geq 2$. 
Moreover, it is shown as a corollary of the corresponding identity 
between finite partial sums of multiple zeta series
(see Theorem \ref{thm:main s_m}). 
In \S3, we also give an interpretation as an identity 
in the harmonic algebra. 

\section{Generating series of truncated sums}
For an integer $m\geq 0$ and an index $\bk=(k_1,\ldots,k_n)$ 
($k_1,\ldots,k_n\geq 1$), we define finite sums 
$\zeta_m(\bk)$ and $\zeta^\star_m(\bk)$ by truncating 
the series for $\zeta(\bk)$ and $\zeta^\star(\bk)$, respectively: 
\begin{equation*}
\zeta_m(\bk)=\sum_{m\geq m_1>\cdots>m_n>0}
\frac{1}{m_1^{k_1}\cdots m_n^{k_n}}, \qquad
\zeta^\star_m(\bk)=\sum_{m\geq m_1\geq\cdots\geq m_n\geq 1}
\frac{1}{m_1^{k_1}\cdots m_n^{k_n}}. 
\end{equation*}
Here an empty sum is read as $0$. 
When $n=0$, we denote by $\varnothing$ the unique index of length zero, 
and put $\zeta_m(\varnothing)=\zeta^\star_m(\varnothing)=1$ for all $m\geq 0$. 

In the following, we fix positive integers $a$, $b$ and $c$ 
satisfying $a+b=2c$. 
For integers $p,q\geq 0$, let $I_{p,q}=I_{p,q}^{a,b,c}$ denote 
the set of all indices obtained by shuffling two sequences 
$(\{a,b\}^q)$ and $(\{c\}^p)$. For example, 
\begin{gather*}
I_{0,0}=\{\varnothing\}, \qquad I_{1,1}=\{(a,b,c),(a,c,b),(c,a,b)\}, \\
I_{1,2}=\{(a,b,c,c),(a,c,b,c),(a,c,c,b),(c,a,b,c),(c,a,c,b),(c,c,a,b)\}.
\end{gather*}

Let us consider the sums of truncated MZVs and MZSVs 
analogous to $s(p,q)$ and $s^\star(p,q)$ in the introduction: 
\[s_m(p,q)=\sum_{\bk\in I_{p,q}}\zeta_m(\bk), \quad 
s_m^\star(p,q)=\sum_{\bk\in I_{p,q}}\zeta^\star_m(\bk). \]
Then Theorem \ref{thm:main 3,1,2} is obtained from the following identity 
by putting $(a,b,c)=(3,1,2)$ and letting $m\to\infty$: 

\begin{thm}\label{thm:main s_m}
For any $p,q\geq 0$ and $m\geq 0$, we have 
\begin{equation}\label{eq:main s_m}
s_m^\star(p,q)=\sum_{\substack{2i+k+u=2p\\ j+l+v=q}}
(-1)^{j+k}{k+l\choose k}{u+v\choose u} 
s_m(i,j)\,\zeta^\star_m(\{c\}^{k+l})\,\zeta^\star_m(\{c\}^{u+v}). 
\end{equation}
\end{thm}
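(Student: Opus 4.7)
The plan is to recast \eqref{eq:main s_m} as a generating-series identity and prove it via $2 \times 2$ transfer matrices. Introduce
\[
S_m(x, y) = \sum_{p, q \geq 0} s_m(p, q) x^{2p} y^q, \qquad
S^\star_m(x, y) = \sum_{p, q \geq 0} s^\star_m(p, q) x^{2p} y^q,
\]
and $Z_m(t) = \sum_{n \geq 0} \zeta^\star_m(\{c\}^n) t^n$. Since $\zeta^\star_m(\{c\}^n)$ is the complete homogeneous symmetric polynomial $h_n$ evaluated at $(1/1^c, \ldots, 1/m^c)$, one has $Z_m(t) = \prod_{j=1}^{m} (1 - t/j^c)^{-1}$. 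Using the binomial identities $\sum_k \binom{n}{k}(-x)^k y^{n-k} = (y-x)^n$ and $\sum_u \binom{n}{u} x^u y^{n-u} = (x+y)^n$, the right-hand side of \eqref{eq:main s_m} collects into $S_m(x, -y) \cdot Z_m(y-x) \cdot Z_m(x+y)$, so the target reduces to the generating-series identity
\[
S^\star_m(x, y) = S_m(x, -y) \cdot Z_m(y-x) \cdot Z_m(x+y).
\]

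To prove this, I would express both $S_m$ and $S^\star_m$ as the $(1,1)$-entry of an ordered matrix product over the levels $j = m, m-1, \ldots, 1$, the state in $\{1, 2\}$ recording whether the next letter expected from the $(a, b, a, b, \ldots)$-subsequence is $a$ or $b$. For $S_m$, the strict inequalities in $\zeta_m$ allow us at each level $j$ to either skip (weight $1$), insert a single $c$ (weight $y/j^c$), or insert the next $ab$-letter (weight $x/j^a$ or $x/j^b$, flipping the state), giving
\[
M_j(x, y) = \begin{pmatrix} 1 + y/j^c & x/j^a \\ x/j^b & 1 + y/j^c \end{pmatrix}, \qquad S_m(x, y) = (M_m M_{m-1} \cdots M_1)_{1,1}.
\]
For $S^\star_m$, weak inequalities permit a whole ``block'' at each strict level, consisting of $k \geq 0$ copies of $c$ shuffled with a contiguous fragment of $\ell \geq 0$ letters from the remaining $ab$-subsequence (with shuffle multiplicity $\binom{k+\ell}{k}$ and a state flip exactly when $\ell$ is odd). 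Summing the weights first in $k$ via $\sum_k \binom{n+k}{k} t^k = (1-t)^{-(n+1)}$ and then in $\ell$ separated by parity, and using $a + b = 2c$ to collapse the exponents, one obtains
\[
N_j(x, y) = \frac{1}{(1 - (x+y)/j^c)(1 - (y-x)/j^c)} \begin{pmatrix} 1 - y/j^c & x/j^a \\ x/j^b & 1 - y/j^c \end{pmatrix}, \qquad S^\star_m(x, y) = (N_m \cdots N_1)_{1,1}.
\]

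The crucial observation is then that the matrix factor of $N_j(x, y)$ is precisely $M_j(x, -y)$, while its scalar prefactor is the $j$-th factor of $Z_m(x+y) Z_m(y-x)$. Since scalars commute through the matrix product (the matrices $M_j$ themselves need not commute for different $j$), pulling them out gives
\[
S^\star_m(x, y) = \prod_{j=1}^{m} \frac{1}{(1 - (x+y)/j^c)(1 - (y-x)/j^c)} \cdot (M_m(x, -y) \cdots M_1(x, -y))_{1,1} = Z_m(x+y) Z_m(y-x) \, S_m(x, -y),
\]
which is the desired identity. The main technical obstacle is the derivation of $N_j$: one must carefully enumerate admissible blocks at each level, account for the shuffle multiplicity and state transition, and verify that the resulting double geometric series collapses exactly into the scalar multiple of $M_j(x, -y)$. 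The hypothesis $a + b = 2c$ is precisely what makes the denominator factor as $(1 - (x+y)/j^c)(1 - (y-x)/j^c)$ and thus matches two copies of $Z_m$; once this computation is done, the scalar factorization of the matrix product exhibits the reformulated identity without further work.
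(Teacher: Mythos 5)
Your proposal is correct and follows essentially the same route as the paper: the same generating functions, the same transfer matrices (your $M_j$ and $N_j$ are the paper's $U_j$ and $V_j$), and the same punchline that the star-side matrix is a scalar multiple of the non-star matrix at $(x,-y)$, so the scalars factor out of the ordered product. The only difference is cosmetic: you derive $N_j$ by enumerating whole blocks at each level and summing a double geometric series, whereas the paper obtains the same matrix by writing the one-step recursion $F^\star_m = F^\star_{m-1}+\frac{x}{m^a}G^\star_m+\frac{y}{m^c}F^\star_m$ (and its companion) and inverting a $2\times 2$ matrix using $a+b=2c$.
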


If we put 
\begin{alignat*}{2}
F_m(x,y)&=\sum_{p,q\geq 0}s_m(p,q)x^{2p}y^q, \quad &
H_m(z)&=\sum_{r\geq 0}\zeta_m(\{c\}^r)z^r, \\
F^\star_m(x,y)&=\sum_{p,q\geq 0}s^\star_m(p,q)x^{2p}y^q, \quad &
H^\star_m(z)&=\sum_{r\geq 0}\zeta^\star_m(\{c\}^r)z^r, 
\end{alignat*}
then it is not difficult to see that Theorem \ref{thm:main s_m} 
is equivalent to the following generating series identity: 

\begin{thm}\label{thm:main gen}
\begin{equation}\label{eq:main gen}
F^\star_m(x,y)=F_m(x,-y)H^\star_m(y-x)H^\star_m(y+x). 
\end{equation}
\end{thm}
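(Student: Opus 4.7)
The plan is to realize both $F_m$ and $F_m^\star$ as matrix entries of non-commutative products of $2 \times 2$ matrices and then apply the adjugate formula for the inverse of a $2 \times 2$ matrix. First I would introduce the transfer matrix
\[
M_j = M_j(x, y) = \begin{pmatrix} y/j^c & x/j^a \\ x/j^b & y/j^c \end{pmatrix} \qquad (j \ge 1)
\]
and establish the representations
\[
F_m(x, y) = \Bigl[\, \prod_{j=m}^{1} (I + M_j) \,\Bigr]_{11}, \qquad F_m^\star(x, y) = \Bigl[\, \prod_{j=m}^{1} (I - M_j)^{-1} \,\Bigr]_{11}.
\]
These should follow by expanding each product and viewing the resulting $(1, 1)$-entry as a sum over closed walks on a two-state graph, the state recording whether the next non-$c$ letter in the index must be an $a$ or a $b$. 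A closed walk starting and ending at state $0$ corresponds to an index $\bk \in I_{p, q}$ via the forced alternation of state flips, and summing the walk's weight over the allowed strictly (resp.\ weakly) decreasing sequences $(j_1, \dots, j_n)$ in $\{1, \dots, m\}$ gives $x^{2p} y^q \zeta_m(\bk)$ (resp.\ $x^{2p} y^q \zeta_m^\star(\bk)$); the geometric series $(I - M_j)^{-1} = \sum_{k \ge 0} M_j^k$ is exactly what permits repeated visits at the same $j$ in the star case.

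Next comes the local computation that makes everything work. The hypothesis $a + b = 2c$ gives
\[
\det(I - M_j) = \Bigl(1 - \frac{y}{j^c}\Bigr)^2 - \frac{x^2}{j^{2c}} = \Bigl(1 - \frac{y - x}{j^c}\Bigr)\Bigl(1 - \frac{y + x}{j^c}\Bigr),
\]
while a direct inspection of the cofactors reveals $\mathrm{adj}(I - M_j) = I + M_j(x, -y)$. Combining these,
\[
(I - M_j(x, y))^{-1} = \frac{I + M_j(x, -y)}{\bigl(1 - (y - x)/j^c\bigr)\bigl(1 - (y + x)/j^c\bigr)}.
\]

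Since the denominators are scalars, they commute with everything and factor out of the non-commutative product $\prod_{j=m}^{1}(I - M_j)^{-1}$. The scalar part assembles to $H_m^\star(y - x)\, H_m^\star(y + x)$, and the matrix part is exactly $\prod_{j=m}^{1}(I + M_j(x, -y))$, whose $(1, 1)$-entry is $F_m(x, -y)$ by the transfer-matrix representation applied with $y$ replaced by $-y$. Reading off the $(1, 1)$-entries of both sides then yields (\ref{eq:main gen}).

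I expect the main obstacle to lie in the careful verification of the transfer-matrix formulas rather than in the algebra: matching the expansions of $\prod(I + M_j)$ and $\prod(I - M_j)^{-1}$ with the combinatorial sums defining $F_m$ and $F_m^\star$ requires some bookkeeping, particularly in the star case where repeated visits at the same $j$ must be enumerated correctly. The algebraic heart of the argument, by contrast, is remarkably short — the identity $\mathrm{adj}(I - M_j) = I + M_j(x, -y)$, a consequence of the symmetric shape of $M_j$, encodes the conversion from star to non-star at the cost of the sign flip $y \mapsto -y$, and the hypothesis $a + b = 2c$ is precisely what causes $\det(I - M_j)$ to split into the two linear factors that assemble into $H_m^\star(y - x)\, H_m^\star(y + x)$.
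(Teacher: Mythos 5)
Your proposal is correct and follows essentially the same route as the paper: the matrices $I+M_j$ and $(I-M_j)^{-1}$ are exactly the paper's $U_j$ and $V_j$, the $(1,1)$-entries of the products coincide with the first components of $U_m\cdots U_1\binom{1}{0}$ and $V_m\cdots V_1\binom{1}{0}$ (the second components being the auxiliary series $G_m,G^\star_m$ attached to the sets $J_{p,q}$), and your adjugate/determinant computation under $a+b=2c$ is precisely the paper's identification $V_j=\bigl(1-\tfrac{y-x}{j^c}\bigr)^{-1}\bigl(1-\tfrac{y+x}{j^c}\bigr)^{-1}U_j(x,-y)$. The only cosmetic difference is that the paper establishes the transfer-matrix representation by a one-step recursion in $m$ rather than by your direct walk-counting expansion.
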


\begin{rem}
Prof.\ Kaneko pointed out that, since 
\[H^\star_m(z)=\prod_{l=1}^m\biggl(1-\frac{z}{l^c}\biggr)^{-1}
=H_m(-z)^{-1}, \]
\eqref{eq:main gen} can be written more symmetrically as 
\[\frac{F^\star_m(x,y)}{H^\star_m(x+y)}=\frac{F_m(x,-y)}{H_m(x-y)}. \]
\end{rem}

To prove the identity \eqref{eq:main gen}, 
we introduce another kind of sums and their generating series. 
We define $J_{p,q}=J_{p,q}^{a,b,c}$ as the set of 
all shuffles of $(b,\{a,b\}^q)$ and $(\{c\}^p)$, e.g.\ 
\[J_{0,0}=\{(b)\}, \qquad 
J_{1,1}=\{(b,a,b,c),(b,a,c,b),(b,c,a,b),(c,b,a,b)\}, \]
and put 
\begin{alignat*}{2}
t_m(p,q)&=\sum_{\bk\in J_{p,q}}\zeta_m(\bk),\quad &
G_m(x,y)&=\sum_{p,q\geq 0}t_m(p,q)x^{2p+1}y^q,\\
t^\star_m(p,q)&=\sum_{\bk\in J_{p,q}}\zeta^\star_m(\bk),\quad &
G^\star_m(x,y)&=\sum_{p,q\geq 0}t^\star_m(p,q)x^{2p+1}y^q. 
\end{alignat*}

\begin{lem}\label{lem:f_m,g_m,F_m,G_m}
For $m\geq 0$, we have 
\begin{align}
\label{eq:F_m,G_m}
\begin{pmatrix}F_m(x,y) \\ G_m(x,y)\end{pmatrix}
&=U_mU_{m-1}\cdots U_1\begin{pmatrix}1 \\ 0\end{pmatrix}, \\
\label{eq:F^star_m,G^star_m}
\begin{pmatrix}F^\star_m(x,y) \\ G^\star_m(x,y)\end{pmatrix}
&=V_mV_{m-1}\cdots V_1\begin{pmatrix}1 \\ 0\end{pmatrix}, 
\end{align}
where 
\begin{align*}
U_l&=\begin{pmatrix}1+\frac{y}{l^c} & \frac{x}{l^a} \\ 
\frac{x}{l^b} & 1+\frac{y}{l^c}\end{pmatrix}, \\
V_l&=
\frac{1}{\bigl(1-\frac{y-x}{l^c}\bigr)\bigl(1-\frac{y+x}{l^c}\bigr)}
\begin{pmatrix}1-\frac{y}{l^c} & \frac{x}{l^a} \\ 
\frac{x}{l^b} & 1-\frac{y}{l^c}\end{pmatrix}. 
\end{align*}
\end{lem}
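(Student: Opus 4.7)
The plan is to derive linear recursions in $m$ for $s_m(p,q)$, $t_m(p,q)$ and their starred counterparts, translate them into the generating-series language, and iterate from $m=0$. The combinatorial engine is a decomposition of the shuffle sets by the first entry of the index: a $\bk\in I_{p,q}$ begins with either $a$ (deleting it leaves an element of $J_{p-1,q}$) or $c$ (remainder in $I_{p,q-1}$), and a $\bk\in J_{p,q}$ begins with either $b$ (remainder in $I_{p,q}$) or $c$ (remainder in $J_{p,q-1}$).

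For the MZV case, splitting the defining sum of $\zeta_m(\bk)$ according to whether $m_1<m$ or $m_1=m$ yields $\zeta_m(\bk)=\zeta_{m-1}(\bk)+\frac{1}{m^{k_1}}\zeta_{m-1}(k_2,\ldots,k_n)$. Summing this over $I_{p,q}$ and $J_{p,q}$ and applying the combinatorial decomposition produces the coupled system
\begin{align*}
s_m(p,q) &= s_{m-1}(p,q)+\tfrac{1}{m^a}t_{m-1}(p-1,q)+\tfrac{1}{m^c}s_{m-1}(p,q-1),\\
t_m(p,q) &= t_{m-1}(p,q)+\tfrac{1}{m^b}s_{m-1}(p,q)+\tfrac{1}{m^c}t_{m-1}(p,q-1).
\end{align*}
Multiplying by $x^{2p}y^q$ and $x^{2p+1}y^q$ respectively and summing converts these into the matrix recursion $\binom{F_m}{G_m}=U_m\binom{F_{m-1}}{G_{m-1}}$. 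The base case $F_0=1$, $G_0=0$ (since $\zeta_0$ vanishes on nonempty indices) then gives \eqref{eq:F_m,G_m} by iteration.

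For the starred case, the combinatorial decomposition is identical, but the splitting identity now reads $\zeta_m^\star(\bk)=\zeta_{m-1}^\star(\bk)+\frac{1}{m^{k_1}}\zeta_m^\star(k_2,\ldots,k_n)$ --- with $\zeta_m^\star$, not $\zeta_{m-1}^\star$, on the right, because in the MZSV definition $m_2=m$ is still permitted. The analogous sums yield a coupled system in which $(s_m^\star(p,q),t_m^\star(p,q))$ appears on both sides, translating into an \emph{implicit} linear relation
\[\begin{pmatrix} 1-y/m^c & -x/m^a \\ -x/m^b & 1-y/m^c\end{pmatrix}\binom{F_m^\star}{G_m^\star}=\binom{F_{m-1}^\star}{G_{m-1}^\star}.\]
Solving by Cramer's rule, the crucial determinant $(1-y/m^c)^2-x^2/m^{a+b}$ factors as $(1-(y-x)/m^c)(1-(y+x)/m^c)$ thanks to the standing hypothesis $a+b=2c$, and the resulting inverse matrix is exactly $V_m$. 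Iterating gives \eqref{eq:F^star_m,G^star_m}.

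The bulk of the work is bookkeeping: matching the combinatorial shuffle decomposition with the correct shifts in the exponents of $x$ and $y$. The only genuinely algebraic step is the factorization of the determinant in the starred case, which is precisely where the hypothesis $a+b=2c$ enters and which produces the factored denominator in $V_m$ that will drive the proof of Theorem \ref{thm:main gen}.
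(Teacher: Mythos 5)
Your proof is correct and follows essentially the same route as the paper: split each truncated sum according to whether $m_1<m$ or $m_1=m$ (together with the first entry $a$, $b$, or $c$ of the index), obtaining an explicit recursion by $U_m$ in the MZV case and an implicit one in the MZSV case because the tail there is $\zeta^\star_m$ rather than $\zeta^\star_{m-1}$, and then invert using the factorization of the determinant afforded by $a+b=2c$. The only difference is cosmetic: you state the recursions at the level of the coefficients $s_m(p,q)$, $t_m(p,q)$ before passing to generating series, whereas the paper decomposes $F_m$, $G_m$ directly.
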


\begin{proof}
For $m=0$, both \eqref{eq:F_m,G_m} and \eqref{eq:F^star_m,G^star_m} 
are obvious. 
For $m\geq 1$, we write 
\begin{align*}
F_m(x,y)
&=\sum_{p,q\geq 0}\sum_{\bk\in I_{p,q}}\zeta_m(\bk)\,x^{2p}y^q\\
&=\sum_{p,q\geq 0}\sum_{(k_1,\ldots,k_{2p+q})\in I_{p,q}}
\sum_{m\geq m_1>\cdots>m_{2p+q}\geq 1}
\frac{x^{2p}y^q}{m_1^{k_1}\cdots m_{2p+q}^{k_{2p+q}}}. 
\end{align*}
We decompose this series into three partial sums, 
each consisting of the terms such that 
(i) $m_1<m$, (ii) $m_1=m$ and $k_1=a$, or (iii) $m_1=m$ and $k_1=c$, 
respectively. 
Then we obtain the equality 
\[F_m(x,y)=F_{m-1}(x,y)+\frac{x}{m^a}G_{m-1}(x,y)+\frac{y}{m^c}F_{m-1}(x,y).\]
Similarly, we also have 
\[G_m(x,y)=G_{m-1}(x,y)+\frac{x}{m^b}F_{m-1}(x,y)+\frac{y}{m^c}G_{m-1}(x,y).\]
Combining them together, we get 
\[\begin{pmatrix}F_m(x,y)\\ G_m(x,y)\end{pmatrix}
=U_m\begin{pmatrix}F_{m-1}(x,y)\\ G_{m-1}(x,y)\end{pmatrix}, \]
and hence \eqref{eq:F_m,G_m} by induction. 

In a similar way, we can show that 
\begin{align*}
F^\star_m(x,y)&=F^\star_{m-1}(x,y)
+\frac{x}{m^a}G^\star_m(x,y)+\frac{y}{m^c}F^\star_m(x,y), \\
G^\star_m(x,y)&=G^\star_{m-1}(x,y)
+\frac{x}{m^b}F^\star_m(x,y)+\frac{y}{m^c}G^\star_m(x,y), 
\end{align*}
that is, 
\[\begin{pmatrix}1-\frac{y}{m^c} & -\frac{x}{m^a} \\
-\frac{x}{m^b} & 1-\frac{y}{m^c}\end{pmatrix}
\begin{pmatrix}F_m(x,y)\\ G_m(x,y)\end{pmatrix}
=\begin{pmatrix}F_{m-1}(x,y)\\ G_{m-1}(x,y)\end{pmatrix}. \]
Since 
\[\begin{pmatrix}1-\frac{y}{m^c} & -\frac{x}{m^a} \\
-\frac{x}{m^b} & 1-\frac{y}{m^c}\end{pmatrix}^{-1}=V_m\]
under the assumption $a+b=2c$, 
we obtain \eqref{eq:F^star_m,G^star_m} by induction. 
\end{proof}

Now it is easy to prove Theorem \ref{thm:main gen}. 
Indeed, the identities \eqref{eq:F_m,G_m} and \eqref{eq:F^star_m,G^star_m} 
imply that 
\begin{align*}
\begin{pmatrix}F^\star_m(x,y) \\ G^\star_m(x,y)\end{pmatrix}
&=\prod_{l=1}^m\Biggl\{\biggl(1-\frac{y-x}{l^c}\biggr)
\biggl(1-\frac{y+x}{l^c}\biggr)\Biggr\}^{-1}
\cdot\begin{pmatrix}F_m(x,-y) \\ G_m(x,-y)\end{pmatrix}\\
&=H^\star_m(y-x)H^\star_m(y+x)
\begin{pmatrix}F_m(x,-y) \\ G_m(x,-y)\end{pmatrix}. 
\end{align*}

\begin{rem}
In the above proof, it is also shown that 
\begin{equation}\label{eq:t_m}
t_m^\star(p,q)=\sum_{\substack{2i+k+u=2p\\ j+l+v=q}}
(-1)^{j+k}{k+l\choose k}{u+v\choose u} 
t_m(i,j)\,\zeta^\star_m(\{c\}^{k+l})\,\zeta^\star_m(\{c\}^{u+v}). 
\end{equation}
\end{rem}

\section{Identities in the harmonic algebra}
In this section, we give algebraic interpretations of 
identities \eqref{eq:main s_m} and \eqref{eq:t_m}.  
First we recall the setup of harmonic algebra 
(see \cite{IKOO} for a more general discussion). 

Let $\frH^1=\Q\langle z_k\mid k\geq 1\rangle$ be 
the free $\Q$-algebra generated by countable number of variables $z_k$ 
($k=1,2,3,\ldots$).  
The harmonic product $*$ is the $\Q$-bilinear product on $\frH^1$ defined by 
\begin{gather*}
w*1=1*w=w,\\ 
z_kw*z_lw'=z_k(w*z_lw')+z_l(z_kw*w')+z_{k+l}(w*w')
\end{gather*}
for $k,l\geq 1$ and $w,w'\in\frH^1$. 
It is known that $\frH^1$ equipped with the product $*$ 
becomes a unitary commutative $\Q$-algebra, denoted by $\frH^1_*$. 

For an integer $m\geq 0$, we define a $\Q$-linear map 
$Z_m\colon \frH^1\longrightarrow\Q$ by 
\[Z_m(1)=1,\quad Z_m(z_{k_1}\cdots z_{k_n})=\zeta_m(k_1,\ldots,k_n). \]
In fact, $Z_m$ is a $\Q$-algebra homomorphism from $\frH^1_*$ to $\Q$. 
Moreover, we define a $\Q$-linear transformation on $\frH^1$ by 
\[S(1)=1,\quad S(z_k)=z_k, \quad S(z_kz_lw)=z_kS(z_lw)+z_{k+l}S(w) \]
and put $Z^\star_m=Z_m\circ S$, so that 
\[Z^\star_m(z_{k_1}\cdots z_{k_n})=\zeta^\star_m(k_1,\ldots,k_n)\]
holds for any $k_1,\ldots,k_n\geq 1$. 

\smallskip 
Now let us put 
\[\frs_{p,q}=\sum_{(k_1,\ldots,k_{2p+q})\in I_{p,q}}
z_{k_1}\cdots z_{k_{2p+q}}, \quad 
\frt_{p,q}=\sum_{(k_1,\ldots,k_{2p+q+1})\in J_{p,q}}
z_{k_1}\cdots z_{k_{2p+q+1}}. \]
Then the fact that the identity \eqref{eq:main s_m} holds 
for \emph{all} $m\geq 0$ suggests that the identities 
\begin{align}
\label{eq:frs}
S(\frs_{p,q})&=\sum_{\substack{2i+k+u=2p\\ j+l+v=q}}
(-1)^{j+k}{k+l\choose k}{u+v\choose u} 
\frs_{i,j}*S(z_c^{k+l})*S(z_c^{u+v}), \\
\label{eq:frt}
S(\frt_{p,q})&=\sum_{\substack{2i+k+u=2p\\ j+l+v=q}}
(-1)^{j+k}{k+l\choose k}{u+v\choose u} 
\frt_{i,j}*S(z_c^{k+l})*S(z_c^{u+v}) 
\end{align}
hold in $\frH^1$. Indeed, this speculation is justified 
by the following theorem: 

\begin{thm}\label{thm:Z_m}
For $w\in\frH^1$, denote the rational sequence 
$\bigl\{Z_m(w)\bigr\}_{m\geq 0}$ by $\calZ(w)$. 
Then the resulting $\Q$-algebra homomorphism 
$\calZ\colon\frH^1_*\longrightarrow\Q^{\N}$ is injective. 
\end{thm}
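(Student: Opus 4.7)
The plan is to identify $\calZ(w)$ with the sequence of Taylor coefficients of an explicit generating function, and thereby reduce the injectivity of $\calZ$ to the classical $\Q$-linear independence of multiple polylogarithms.

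First, for a single monomial $w = z_{k_1}\cdots z_{k_n}$ with $\bk = (k_1,\ldots,k_n)$, I would interchange the two summations to obtain
\[
\sum_{m \geq 0} Z_m(w)\, X^m
= \sum_{m_1 > \cdots > m_n \geq 1} \frac{1}{m_1^{k_1} \cdots m_n^{k_n}} \sum_{m \geq m_1} X^m
= \frac{\Li_\bk(X)}{1-X},
\]
where $\Li_\bk(X) := \sum_{m_1 > \cdots > m_n \geq 1} X^{m_1}/(m_1^{k_1} \cdots m_n^{k_n})$ is the multiple polylogarithm, with $\Li_\varnothing(X) = 1$. Extending by $\Q$-linearity, for $w = \sum_\bk c_\bk z_\bk \in \frH^1$ this gives $\sum_{m \geq 0} Z_m(w)\, X^m = \Li_w(X)/(1-X)$ in $\Q[[X]]$, where $\Li_w(X) := \sum_\bk c_\bk \Li_\bk(X)$.

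Since $1/(1-X)$ is a unit of $\Q[[X]]$, the condition $\calZ(w) = 0$ is equivalent to the vanishing of $\Li_w(X)$ as a formal power series. Consequently, injectivity of $\calZ$ is equivalent to the $\Q$-linear independence of the family $\{\Li_\bk(X)\}_\bk$ in $\Q[[X]]$, where $\bk$ ranges over all finite indices including $\varnothing$.

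This linear independence is classical. Using the iterated-integral representation $\Li_\bk(X) = \int_0^X \omega_0^{k_1-1}\omega_1\,\omega_0^{k_2-1}\omega_1 \cdots \omega_0^{k_n-1}\omega_1$ with $\omega_0 = dt/t$ and $\omega_1 = dt/(1-t)$, distinct indices correspond to distinct words in the two-letter alphabet $\{\omega_0,\omega_1\}$, and iterated integrals of distinct words along a common path are $\Q$-linearly independent (Chen). Alternatively, one can proceed by induction on the weight via the operators $X\,d/dX$ and $(1-X)\,d/dX$, each of which strips a leading letter from the associated word. The main obstacle is precisely this linear-independence step: the generating-function identity in the second paragraph is a routine swap of summations, but deducing the linear independence of multiple polylogarithms as power series requires either citing Chen's theorem or executing the inductive reduction via these differential operators.
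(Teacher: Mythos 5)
Your proposal is correct and follows essentially the same route as the paper: both reduce the injectivity of $\calZ$ to the $\Q$-linear independence of the multiple polylogarithms, your identity $\sum_{m\geq 0}Z_m(w)X^m=\Li_{w}(X)/(1-X)$ being the paper's relation $\Li_\bk(t)=\sum_{m>0}\bigl(\zeta_m(\bk)-\zeta_{m-1}(\bk)\bigr)t^m$ read through the unit $1/(1-X)$, with the empty index handled by you inside the generating function and by the paper via $\Ker Z_0=\frH^1_{>0}$. The only difference is the authority invoked for the independence step (Chen's theorem on iterated integrals, or your sketched induction, versus the paper's citation of Brown's Corollary 5.6), and your explicit flagging of that step as the crux matches the paper's own level of detail.
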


If we put $\frH^1_{>0}=\bigoplus_{k\geq 1}z_k\frH^1$, 
it is obvious from the definition of $Z_m$ that 
$\frH^1_{>0}=\Ker Z_0$. 
Hence it suffices to consider the map 
\[\frH^1_{>0}\longrightarrow \Q^{\Z_{>0}};\ 
w\longmapsto \bigl\{Z_m(w)\bigr\}_{m>0}. \]
The injectivity of this map is an immediate consequence 
of the following theorem, which is obtained by specializing 
Corollary 5.6 in \cite{Br}: 

\begin{thm}\label{thm:Li}
The multiple polylogarithm functions 
\[Li_\bk(t)=\sum_{m_1>\cdots>m_n>0}\frac{t^{m_1}}{m_1^{k_1}\cdots m_n^{k_n}}
=\sum_{m>0}\bigl(\zeta_m(\bk)-\zeta_{m-1}(\bk)\bigr)t^m, \]
for $\bk=(k_1,\ldots,k_n)\in(\Z_{>0})^n$ and $n\geq 1$, 
are linearly independent over the ring $\C[t,1/t,1/(1-t)]$. 
\end{thm}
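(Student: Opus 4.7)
The plan is to prove this by induction on weight, exploiting the differential equations
\[\frac{d}{dt}\Li_{k_1,\ldots,k_n}(t)=\begin{cases}t^{-1}\,\Li_{k_1-1,k_2,\ldots,k_n}(t)&(k_1\geq 2),\\[2pt](1-t)^{-1}\,\Li_{k_2,\ldots,k_n}(t)&(k_1=1),\end{cases}\]
together with the convention $\Li_\varnothing(t)=1$. Writing $R:=\C[t,1/t,1/(1-t)]$, these identities show that $d/dt$ carries each $\Li_\bk$ into $R\cdot\Li_{\bk'}$ with $|\bk'|=|\bk|-1$, so that for every $N$ the $R$-submodule spanned by all $\Li_\bk$ of weight $|\bk|\leq N$ is stable under differentiation.

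Suppose for contradiction that a nontrivial relation $\sum_\bk f_\bk(t)\Li_\bk(t)=0$ holds with $f_\bk\in R$; after clearing the denominators $t$ and $1-t$, I may assume $f_\bk\in\C[t]$. Let $N$ be the maximal weight with some $f_\bk\neq 0$, and among all such relations pick one minimizing $\sum_{|\bk|=N}\deg f_\bk$. Differentiating yields
\[0=\sum_{|\bk|=N}f_\bk'(t)\,\Li_\bk(t)+\Phi(t),\]
where $\Phi(t)$ is an $R$-linear combination of polylogs of weight $\leq N-1$. The induction hypothesis applied to $\Phi$, together with minimality of the total coefficient degree at top weight, forces $f_\bk'\equiv 0$ for each $|\bk|=N$. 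Hence the top-weight coefficients are constants $c_\bk\in\C$.

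The remaining step is to show that $\sum_{|\bk|=N}c_\bk\Li_\bk(t)$ cannot lie in the $R$-span of strictly lower-weight polylogs unless every $c_\bk=0$. I would tackle this via the asymptotic expansion at $t\to 1^-$: polylogs with $k_1\geq 2$ have finite limits $\zeta(\bk)$, whereas those with $k_1=1$ diverge with precise leading term of the form $\frac{(-\log(1-t))^r}{r!}$ for an appropriate $r\geq 1$. Matching coefficients of each power of $\log(1-t)$ converts the alleged congruence into a system of $\C$-linear relations among MZVs and lower-weight polylogs evaluated at $t=1$, which can then be peeled off one power of the logarithm at a time.

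The main obstacle is precisely this last elimination, because it ultimately requires genuine input on the linear independence of boundary values—or, equivalently, faithful control of the monodromy at $t=1$—that cannot be extracted from the differential calculus alone. The cleanest route is to pass to iterated integrals on $\mathbb{P}^1\setminus\{0,1,\infty\}$, identify $\Li_\bk(t)$ with the iterated integral attached to the admissible word $0^{k_1-1}1\cdots 0^{k_n-1}1$, and invoke the shuffle-algebra structure together with the monodromy argument of Brown's motivic framework. This is exactly the content of his Corollary 5.6, to which I would appeal in order to close the induction.
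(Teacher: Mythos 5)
Your proposal ends exactly where the paper's proof begins and ends: the paper gives no argument for Theorem \ref{thm:Li} beyond the remark that it is a specialization of Corollary 5.6 of \cite{Br}, and your argument likewise terminates in an appeal to that same corollary, so as a matter of logical completeness the two are on equal footing. The genuine difference is that you first carry out a reduction via the differential equations for $\Li_\bk$, and here you have actually done more than you give yourself credit for: the obstacle you identify at the end --- which you believe requires monodromy or motivic input --- is not really there, and the asymptotic-expansion route you sketch is indeed the wrong one (matching powers of $\log(1-t)$ would entangle you with unknown $\Q$-linear relations among multiple zeta values). Instead, having shown that the top-weight coefficients are constants $c_\bk$, apply the induction hypothesis in weight $\leq N-1$ to the differentiated relation: for each weight-$(N-1)$ index $\bk'=(k_1',\ldots,k_n')$ the coefficient of $\Li_{\bk'}$ must vanish, which reads $g_{\bk'}'+c_{(k_1'+1,k_2',\ldots,k_n')}\,t^{-1}+c_{(1,k_1',\ldots,k_n')}\,(1-t)^{-1}=0$ with $g_{\bk'}\in\C(t)$. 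Since the derivative of a rational function has zero residue at every pole, both constants vanish; and since every weight-$N$ index arises uniquely in one of these two forms from some weight-$(N-1)$ index $\bk'$ (allowing $\bk'=\varnothing$, which disposes of $\Li_1$), all $c_\bk=0$ and the induction closes. This is the classical elementary argument for linear independence of hyperlogarithms, and it would make the citation of \cite{Br} unnecessary; to run it cleanly you should strengthen the induction to include the constant function $1=\Li_\varnothing$ among the functions claimed independent, and tidy the minimality step (after differentiating, re-clearing denominators multiplies the top-weight coefficients by powers of $t$ and $1-t$, so minimize, say, the number of nonzero top-weight coefficients after normalizing one of them to $1$). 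As submitted, your proof is acceptable for the same reason the paper's is, but the elementary completion is both available and preferable.
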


\begin{rem}
It is also possible to prove the identities \eqref{eq:frs} and \eqref{eq:frt} 
directly, by making computations similar to the proof of Proposition 4 
in \cite{IKOO}, in the matrix algebra $M_2\bigl(\frH^1_*[[x,y]]\bigr)$. 
\end{rem}

\end{document}